\documentclass[11pt]{article}

\usepackage[margin=25mm]{geometry}
\usepackage{amsmath,amssymb,amsthm,mathtools,amsbsy}
\usepackage{multirow}
\usepackage{enumerate}
\usepackage{xurl}
\usepackage{xcolor}

\usepackage{hyperref}

\usepackage{cleveref}

\usepackage{amsbsy}
\def\ba{\pmb{a}}
\def\bb{\pmb{b}}
\def\bc{\pmb{c}}
\def\bd{\pmb{d}}
\def\be{\pmb{e}}

\def\bx{\pmb{x}}
\def\by{\pmb{y}}

\def\bone{\pmb{1}}

\def\cR{{\cal R}}

\newtheorem{theorem}{Theorem}[section]
\newtheorem{lemma}{Lemma}[section]
\newtheorem{corollary}{Corollary}[section]
\newtheorem{conjecture}{Conjecture}[section]

\theoremstyle{definition}
\newtheorem{definition}{Definition}[section]

\newcommand{\bbC}{\mathbb{C}}
\newcommand{\bbI}{\mathbb{I}}
\newcommand{\bbJ}{\mathbb{J}}
\newcommand{\bbN}{\mathbb{N}}
\newcommand{\bbP}{\mathbb{P}}
\newcommand{\bbR}{\mathbb{R}}

\DeclareMathOperator{\diag}{diag}

\DeclareMathOperator{\rank}{rank}
\DeclareMathOperator{\tr}{tr}
\DeclareMathOperator{\rmd}{d}
\DeclareMathOperator{\UI}{ui}

\DeclareMathOperator{\F}{F}
\DeclareMathOperator{\HH}{H}
\DeclareMathOperator{\T}{T}

\numberwithin{equation}{section}

\title{On a Conjecture Related to Eigenvalue Perturbations}

\author{
Lei-Hong Zhang%
\thanks{School of Mathematical Sciences, Soochow University, Suzhou 215006, Jiangsu, China. Email: {\tt longzlh@suda.edu.cn}.
             Supported in part by the National Natural Science Foundation of China
             NSFC-12471356, NSFC-12371380.}
\and
Ren-Cang Li%
\thanks{Department of Mathematics,
University of Texas at Arlington, Arlington, TX 76019-0408, USA.
Email: {\tt rcli@uta.edu.}}
}

\date{ }

\begin{document}
\maketitle

\begin{abstract}
In connection with eigenvalue perturbation bounds, Li
[\emph{Linear Algebra Appl.}, 278 (1998), pp.~317--326]
formulated three minimization problems involving Hadamard products.
The third asks how to relate
$$
\min_{W\ \mathrm{nonsingular}}\|W\circ G\|_\mathrm{ui}\,\|W^{-1}\circ G^\mathrm{T}\|_\mathrm{ui}
\quad\mbox{and}\quad
\min_{P\ \mathrm{permutation}}\|P\circ G\|_\mathrm{ui}^2,
$$
where $\|\cdot\|_\mathrm{ui}$ denotes a unitarily invariant norm and $\circ$
denotes the Hadamard product, i.e., entrywise multiplication.
The first quantity never exceeds the second, and a reverse inequality
with a constant independent of the matrix size was conjectured.
Romeo and Tilli [\emph{Linear Algebra Appl.}, 326 (2001), pp.~161--172]
showed that if such a bound holds for every $G$, then $\|\cdot\|_\mathrm{ui}$
must be uniformly equivalent to the Frobenius norm $\|\cdot\|_\mathrm{F}$.
We prove that the two quantities coincide when $\|\cdot\|_\mathrm{ui}=\|\cdot\|_\mathrm{F}$:
for every complex matrix $G$,
$$
\min_{W\ \mathrm{nonsingular}}
\|W\circ G\|_\mathrm{F}\,\|W^{-1}\circ G^\mathrm{T}\|_\mathrm{F}
=\min_{P\ \mathrm{permutation}}\|P\circ G\|_\mathrm{F}^2.
$$
The associated conjecture therefore holds with the optimal constant $c=1$.
%In connection with eigenvalue perturbation bounds, Li
%[\emph{Linear Algebra Appl.}, 278 (1998), pp.~317--326]
%formulated three minimization problems involving Hadamard products.
%The third asks how to relate
%$$
%\min_{W\ \mathrm{nonsingular}}\|W\circ G\|_{\UI}\,\|W^{-1}\circ G^{\T}\|_{\UI}
%\quad\mbox{and}\quad
%\min_{P\ \mathrm{permutation}}\|P\circ G\|_{\UI}^2,
%$$
%where $\|\cdot\|_{\UI}$ denotes a unitarily invariant norm and $\circ$
%denotes the Hadamard product, i.e., entrywise multiplication.
%The first quantity never exceeds the second, and a reverse inequality
%with a constant independent of the matrix size was conjectured.
%Romeo and Tilli [\emph{Linear Algebra Appl.}, 326 (2001), pp.~161--172]
%showed that if such a bound holds for every $G$, then $\|\cdot\|_{\UI}$
%must be uniformly equivalent to the Frobenius norm $\|\cdot\|_{\F}$.
%We prove that the two quantities coincide when $\|\cdot\|_{\UI}=\|\cdot\|_{\F}$:
%for every complex matrix $G$,
%$$
%\min_{W\ \mathrm{nonsingular}}
%\|W\circ G\|_{\F}\,\|W^{-1}\circ G^{\T}\|_{\F}
%=\min_{P\ \mathrm{permutation}}\|P\circ G\|_{\F}^2.
%$$
%The associated conjecture therefore holds with the optimal constant $c=1$.
\end{abstract}

\medskip
{\bf Keywords:} Spectral variation; Hadamard product; unitarily invariant norm;
Frobenius norm; doubly superstochastic matrix

 \medskip
{\bf AMS subject classifications.  l5A42, 65F99, l5A60}

%%%%%%%%%%%%%
\section{Introduction}
\label{sec:intro}

The spectral variation of a matrix has been a very active research subject
in both matrix theory and numerical linear algebra.
In~\cite{li:1998} it was observed that a number of perturbation results
for eigenvalues of matrices/matrix pencils can be recast as
minimization problems involving Hadamard products (see also \cite{li:1993a}).
These reformulations were not the main goal of that paper.
What was hoped for was a possibly unifying theory behind these perturbation results.

Let $G=[g_{ij}]\in\bbC^{n\times n}$ be given, and
let $X\circ Y=[x_{ij}y_{ij}]$ denote the matrix Hadamard product of $X=[x_{ij}]$
and $Y=[y_{ij}]$ of the same size.
Denote by $\|\cdot\|_{\UI}$  any unitarily invariant norm, normalized so that
it agrees with $\|\cdot\|_2$ on rank-one matrices. Two special unitarily invariant norms
are the matrix spectral norm $\|\cdot\|_2$ and the matrix Frobenius norm $\|\cdot\|_{\F}$.

The following three problems were put forward in~\cite{li:1998}.

\medskip
\noindent{\bf Problem 1.}
Relate $\displaystyle\min_{Q\ \mathrm{unitary}}\|Q\circ G\|_{\UI}$
to $\displaystyle\min_{P\ \mathrm{permutation}}\|P\circ G\|_{\UI}$.

\medskip
\noindent{\bf Problem 2.}
Relate $\displaystyle\min_{W\ \mathrm{nonsingular}}\|W^{-1}\|_2\,\|W\circ G\|_{\UI}$
to $\displaystyle\min_{P\ \mathrm{permutation}}\|P\circ G\|_{\UI}$.

\medskip
\noindent{\bf Problem 3.}
Relate $\displaystyle\min_{W\ \mathrm{nonsingular}}\|W\circ G\|_{\UI}\,\|W^{-1}\circ G^{\T}\|_{\UI}$
to $\displaystyle\min_{P\ \mathrm{permutation}}\|P\circ G\|_{\UI}^2$.

\medskip

A trivial relation in each problem is that the first expression is no bigger
than the second, because every permutation matrix is unitary, hence
nonsingular.
In general, Li's conjectures \cite{li:1998} ask for reverse inequalities with
constants independent of $n$ and of $G$. Specifically, for
Problems~2 and 3, the conjectures are stated below.

\begin{conjecture}\label{conj:li1998}
There are constants $c_1\ge 1$ and $c_2\ge 1$, both possibly dependent on
the unitarily invariant norm $\|\cdot\|_{\UI}$, but independent of the matrix sizes and $G$, such that
\begin{align}
c_1\cdot\min_{W\ \mathrm{nonsingular}}\|W^{-1}\|_2\,\|W\circ G\|_{\UI}
   &\ge\min_{P\ \mathrm{permutation}}\|P\circ G\|_{\UI}, \label{eq:liconj-1}  \\
c_2\cdot\min_{W\ \mathrm{nonsingular}}\|W\circ G\|_{\UI}\,\|W^{-1}\circ G^{\T}\|_{\UI}
   &\ge\min_{P\ \mathrm{permutation}}\|P\circ G\|_{\UI}^2. \label{eq:liconj-2}
\end{align}
\end{conjecture}

Although not explicitly formulated, a natural conjecture about Problem~1 is that there exists a constant $c_0$,
independent of the matrix sizes, such that
\begin{equation}\label{eq:liconj-0}
c_0\cdot\min_{W\ \mathrm{unitary}}\|W\circ G\|_{\UI}
   \ge\min_{P\ \mathrm{permutation}}\|P\circ G\|_{\UI}.
\end{equation}
For the Frobenius norm, the proof technique of Hoffman and Wielandt \cite{howi:1953} immediately yields
\begin{align}
\min_{W\ \mathrm{unitary}}\|W\circ G\|_{\F}
   &=\min_{P\ \mathrm{permutation}}\|P\circ G\|_{\F}, \label{eq:p1F:unitary} \\
\min_{W\ \mathrm{unitary}}\|W\circ G\|_{\F}\,\|W^{-1}\circ G^{\T}\|_{\F}
             &=\min_{P\ \mathrm{permutation}}\|P\circ G\|_{\F}^2. \label{eq:p2F:unitary}
\end{align}
Also for the Frobenius norm,
Problem~2 was solved in~\cite{li:1998}:
\begin{equation}\label{eq:p2F}
\min_{W\ \mathrm{nonsingular}}\|W^{-1}\|_2\,\|W\circ G\|_{\F}
=\min_{P\ \mathrm{permutation}}\|P\circ G\|_{\F},
\end{equation}
with the help of a theorem of Elsner and Friedland~\cite{elfr:1995}
and again the proof technique in \cite{howi:1953}.

Yet, the Frobenius norm case of Problem~3, i.e., conjecture \eqref{eq:liconj-2}, remains open.
The purpose of this paper is to settle \eqref{eq:liconj-2}
for the Frobenius norm.
Our main result is
\begin{theorem}
\label{thm:main}
For every $n\ge 1$ and every $G\in\bbC^{n\times n}$,
\begin{equation}
\label{eq:main}
\min_{W\ \mathrm{nonsingular}}
  \|W\circ G\|_{\F}\,\|W^{-1}\circ G^{\T}\|_{\F}
  =\min_{P\ \mathrm{permutation}}\|P\circ G\|_{\F}^2.
\end{equation}
\end{theorem}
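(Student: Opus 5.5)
The plan is to reduce \eqref{eq:main} to a linear optimization over doubly stochastic matrices, using Cauchy--Schwarz and the structure of $W$ and $W^{-1}$. The inequality ``$\le$'' is immediate: for a permutation matrix $P$ we have $P^{-1}=P^{\T}$ and $P^{\T}\circ G^{\T}=(P\circ G)^{\T}$. Hence $W=P$ gives $\|W\circ G\|_{\F}\|W^{-1}\circ G^{\T}\|_{\F}=\|P\circ G\|_{\F}^2$. This also shows the minimum on the left is attained once we prove the reverse inequality for every nonsingular $W$.

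For ``$\ge$'', fix a nonsingular $W=[w_{ij}]$ and write $V=W^{-1}=[v_{ij}]$ and $a_{ij}=|g_{ij}|^2\ge 0$. Then
$$
\|W\circ G\|_{\F}^2=\sum_{i,j}a_{ij}|w_{ij}|^2,\qquad
\|V\circ G^{\T}\|_{\F}^2=\sum_{i,j}a_{ij}|v_{ji}|^2 .
$$
By the Cauchy--Schwarz inequality (with weights $a_{ij}$), the product of the two norms is at least $\sum_{i,j}a_{ij}s_{ij}$, where $s_{ij}=|w_{ij}|\,|v_{ji}|$. The identities $(WV)_{ii}=\sum_j w_{ij}v_{ji}=1$ and $(VW)_{jj}=\sum_i v_{ji}w_{ij}=1$, combined with the triangle inequality, give that every row sum and every column sum of $S=[s_{ij}]$ is at least $1$. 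In other words, $S$ is doubly superstochastic.

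Next I would invoke (or prove) the standard fact that a doubly superstochastic matrix dominates entrywise some doubly stochastic matrix $D$, i.e.\ $0\le d_{ij}\le s_{ij}$. Since $a_{ij}\ge 0$, this gives $\sum a_{ij}s_{ij}\ge\sum a_{ij}d_{ij}$. By Birkhoff's theorem, $D$ is a convex combination of permutation matrices, so
$$
\sum a_{ij}d_{ij}\ge\min_{\pi}\sum_i a_{i\pi(i)}=\min_P\|P\circ G\|_{\F}^2 .
$$
Chaining these inequalities yields $\|W\circ G\|_{\F}\|W^{-1}\circ G^{\T}\|_{\F}\ge\min_P\|P\circ G\|_{\F}^2$ for every nonsingular $W$, which proves \eqref{eq:main}.

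The main obstacle is the domination lemma for doubly superstochastic matrices. If it is not taken as known, I would prove it by a max-flow/min-cut (equivalently, a Hall-type) argument. Take the bipartite network with source-to-row capacities $1$, row-to-column capacities $s_{ij}$, and column-to-sink capacities $1$. For any cut determined by a row set $I$ and column set $J$, the capacity is at least $(n-|I|)+|J|+\sum_{i\in I,\,j\notin J}s_{ij}$. Because the column sums of $S$ are at least $1$, the sum $\sum_{i\in I,\,j\notin J}s_{ij}$ is at least $|I|-|J|$ minus the mass of $S$ outside rows in $I$, and a little care with the row and column sums shows every cut has capacity at least $n$. A maximum flow of value $n$ then gives the desired $D$.

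A simpler alternative for the lemma is an iterative argument: repeatedly decrease an entry lying in a row whose sum exceeds $1$ and a column whose sum exceeds $1$. If no such entry exists, the excess rows and excess columns have disjoint supports, and a counting argument comparing total row and column sums shows all sums are already equal to $1$.
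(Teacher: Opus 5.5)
\textbf{There is a genuine gap, and it breaks the route itself.} Row and column sums at least $1$ do not make a nonnegative matrix doubly superstochastic. The exact criterion is the cut condition \eqref{eq:cut} of Lemma~\ref{lem:cut}, $\sum_{i\in\bbI,\,j\in\bbJ}h_{ij}\ge|\bbI|+|\bbJ|-n$ for all $\bbI,\bbJ$, which is strictly stronger. So the ``standard fact'' you invoke is false in the form you need, and neither the max-flow sketch nor the iterative sketch can be completed.

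Worse, your specific $S=[\,|w_{ij}|\,|(W^{-1})_{ji}|\,]$ really can violate the cut condition, so no domination lemma can rescue the chain. Take
\[
W=\begin{pmatrix}1&0&1\\0&1&-1\\1&-1&1\end{pmatrix},\qquad
W^{-1}=\begin{pmatrix}0&1&1\\1&0&-1\\1&-1&-1\end{pmatrix},\qquad
S=\begin{pmatrix}0&0&1\\0&0&1\\1&1&1\end{pmatrix}.
\]
The row and column sums of $S$ are $(1,1,3)$, but its $\{1,2\}\times\{1,2\}$ block is zero. Equivalently, your network cut with rows $\{1,2\}$ and column $\{3\}$ on the source side has capacity $2<3$. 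Now let $|g_{ij}|^2=1$ for $i,j\in\{1,2\}$ and $0$ otherwise. Then $\sum_{i,j}a_{ij}s_{ij}=0$. But any permutation must send row $1$ or row $2$ into column $1$ or $2$, so $\min_P\|P\circ G\|_{\F}^2=1$. The theorem itself is fine here, since $\|W\circ G\|_{\F}\,\|W^{-1}\circ G^{\T}\|_{\F}=2$. What fails is your reduction: Cauchy--Schwarz replaces the product of norms by the entrywise geometric mean $|w_{ij}|\,|(W^{-1})_{ji}|$, and that loses too much. The diagonal identities $(WW^{-1})_{ii}=(W^{-1}W)_{jj}=1$ also carry too little information to control the off-diagonal blocks.

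\textbf{The missing idea} is to keep the two quadratic terms separate and balance them with a single scalar $t$. First write
\[
\|W\circ G\|_{\F}\,\|W^{-1}\circ G^{\T}\|_{\F}=\sqrt{xy}=\inf_{t>0}\tfrac12\bigl(tx+t^{-1}y\bigr).
\]
Next note that $\frac12(tx+t^{-1}y)=\sum_{i,j}h_{ij}(t)a_{ij}$, where $H(t)=\frac12\bigl(t|W|^{\circ2}+t^{-1}|W^{-\T}|^{\circ2}\bigr)$.

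The real work is showing that $H(t)$ satisfies the cut condition, and this uses all of $WW^{-1}=I$, not just its diagonal. Set $A=E_{\bbI}WE_{\bbJ}$ and $B=E_{\bbJ}W^{-1}E_{\bbI}$. The product $AB$ acts as the identity on $\cR(E_{\bbI})\cap W\cR(E_{\bbJ})$, a subspace of dimension at least $k=|\bbI|+|\bbJ|-n$. Weyl's inequality and H\"older then give
\[
k\le\|AB\|_{S_1}\le\|A\|_{\F}\,\|B\|_{\F}\le\sum_{i\in\bbI,\,j\in\bbJ}h_{ij}(t).
\]
Birkhoff is then applied to $H(t)$ for each fixed $t$, and the infimum over $t$ is taken only at the end. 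Your ``$\le$'' direction and your Birkhoff step are fine as written.
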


Throughout this paper, we will consider unitarily invariant norms $\|\cdot\|_{\UI}$ that are generic to matrix sizes, i.e.,
the norms can be applied to matrix of any sizes. Examples include the matrix spectral norm $\|\cdot\|_2$,
the matrix Frobenius norm $\|\cdot\|_{\F}$,
and more generally, the matrix Schatten $p$-norm $\|\cdot\|_{S_p}$ ($1\le p\le \infty$).

 The rest of this paper is organized as follows.
In \cref{sec:roti2001}, we discuss the  substantial progress by Tilli~\cite{till:2001} and Romeo and Tilli~\cite{roti:2001}
towards \Cref{conj:li1998} and their immediate implications.
To prepare our proof of Theorem~\ref{thm:main} in \cref{sec:proof},  \cref{sec:notation} collects
some preliminaries, especially on doubly superstochastic matrices,
and proves
that a quadratic
mean of the entrywise squares of $W$ and of $W^{-\T}$ is doubly
superstochastic.
In \cref{sec:discussion}, we will discuss some special cases of our conjectures for those $G$ coming from
eigenvalue perturbation theory and directions for further research. Finally,
conclusions are drawn in \cref{sec:concl}.

{\bf Notation.}
Throughout, $\bbC^{m\times n}$ denotes the set of $m\times n$ complex
matrices, while $\bbR^{m\times n}$ is the set of $m\times n$ real
matrices.
The $n\times n$ identity matrix is $I_n$, or simply $I$ when its size is clear from the context, and $\be_1,\ldots,\be_n$ form the standard basis
of $\bbC^n$, or equivalently, the columns   of $I_n$.
$\cR(X)$ stands for the column space of $X$.
The complex conjugate transpose is written as $X^{\HH}$, while $(\cdot)^{\T}$ takes the
matrix/vector transpose.

\section{Romeo-Tilli Theorem and Implications}\label{sec:roti2001}
Limiting to unitary matrices,
Tilli~\cite{till:2001} and Romeo and Tilli~\cite{roti:2001} made substantial progress towards
\Cref{conj:li1998}. Specifically, they proved

\begin{theorem}[{Romeo and Tilli~\cite{roti:2001}}]
\label{thm:roti2001}
Let $\|\cdot\|_{\UI}$ be a unitarily invariant norm.
\begin{enumerate}[{\rm (a)}]
  \item If there exists a constant $c_1\ge 1$, independent of matrix size, such that
        \begin{equation}\label{eq:liconj-1'}
        c_1\cdot\min_{W\ \mathrm{unitary}}\|W^{-1}\|_2\,\|W\circ G\|_{\UI}
            \ge\min_{P\ \mathrm{permutation}}\|P\circ G\|_{\UI},
        \end{equation}
        then the unitarily invariant norm must be uniformly equivalent to
        the Frobenius norm:
        \begin{equation}\label{eq:liconj-1'cond}
        c_1^{-1}\|\cdot\|_{\F}\le\|\cdot\|_{\UI}\le c_1\|\cdot\|_{\F}.
        \end{equation}
  \item  If there exists a constant $c_2\ge 1$, independent of matrix size, such that
        \begin{equation}\label{eq:liconj-2'}
        c_2\cdot\min_{W\ \mathrm{unitary}}\|W\circ G\|_{\UI}\,\|W^{-1}\circ G^{\T}\|_{\UI}
             \ge\min_{P\ \mathrm{permutation}}\|P\circ G\|_{\UI}^2
        \end{equation}
        then the unitarily invariant norm must be uniformly equivalent to
        the Frobenius norm:
        \begin{equation}\label{eq:liconj-2'cond}
        c_2^{-1/2}\|\cdot\|_{\F}\le\|\cdot\|_{\UI}\le c_2^{1/2}\|\cdot\|_{\F}.
        \end{equation}
\end{enumerate}
\end{theorem}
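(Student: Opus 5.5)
The plan is to prove both parts by testing the hypothesized inequality on specially built matrices $G$, for which the left side can be evaluated exactly at one well-chosen unitary $W$ and the right side reduces to a diagonal matrix. Two facts drive this.

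First, if $G$ has nonnegative entries, or entries with arbitrary phases, then $P\circ G$ has exactly one nonzero entry in each row and column. Its singular values are therefore $|g_{i\sigma(i)}|$, where $\sigma$ is the permutation of $P$. Hence
$$\|P\circ G\|_{\UI}=\|\diag(|g_{1\sigma(1)}|,\ldots,|g_{n\sigma(n)}|)\|_{\UI}.$$

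Second, I will take $W$ to be a normalized real Hadamard matrix $W=H/\sqrt{n}$, with $n$ a power of $2$, so $W^{-1}=W^{\T}$ and $|w_{ij}|=n^{-1/2}$. For any real $A$, set $G=\sqrt{n}\,H\circ A$. Since $h_{ij}^2=1$, this gives $W\circ G=A$ and $W^{-1}\circ G^{\T}=W^{\T}\circ G^{\T}=A^{\T}$. So the hypothesis in (b) yields
$$c_2\,\|A\|_{\UI}^2\ \ge\ n\min_\sigma\|\diag(|a_{i\sigma(i)}|)\|_{\UI}^2,$$
and (a) yields the analogous inequality with $\|W^{-1}\|_2=1$.

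For the upper bound $\|\cdot\|_{\UI}\le c_2^{1/2}\|\cdot\|_{\F}$, I first take $A=J_n/n$, the scaled all-ones matrix. It is rank one, so $\|A\|_{\UI}=\|A\|_2=1$ by the normalization. The right side is $n\cdot n^{-2}\|I_n\|_{\UI}^2$, which gives $\|I_n\|_{\UI}^2\le c_2 n=c_2\|I_n\|_{\F}^2$.

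To reach an arbitrary diagonal $D=\diag(d_i)$ with $d_i\ge0$, which suffices by unitary invariance, I use a rank-one $A=xx^{\T}$ with $x$ built from the $\sqrt{d_i}$. Each value is replicated many times, by tensoring with a long constant vector, so that the minimizing permutation pairs equal values. The diagonal obtained is then, up to an extra factor of $n$ absorbed by the replication, a block copy of $D$. Comparing $\|A\|_2=\|x\|^2$ with $\|x\|^2=\|D\|_{\F}\cdot(\text{replication factor})$ then gives the bound. Two points need care here: that the minimizing permutation is the "sorted" one, by a rearrangement argument for symmetric gauge functions, and that replication does not change the ratio $\|\cdot\|_{\UI}/\|\cdot\|_{\F}$ in the limit. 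Part (a) is handled identically, with $c_1$ in place of $c_2^{1/2}$ since only one factor appears.

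The reverse bound $\|\cdot\|_{\F}\le c\,\|\cdot\|_{\UI}$ is the main obstacle, because the trivial choice $W=I$ only reproduces the permutation side. My first attempt will be a dual construction. I would choose $A$ whose singular values are flat, for example $A$ itself a multiple of a Hadamard matrix or of $I$ tensored with $J_k/k$. Then $\|A\|_{\UI}$ is governed by $\|I_m\|_{\UI}$, while the entries of $G$ force every $P\circ G$ to collect large diagonal entries. That setup yields an inequality of the form $\|I_m\|_{\UI}\ge c^{-1}\sqrt{m}$.

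I would then pass to general diagonals via the symmetric-gauge-function representation. The natural route is a duality argument, applying the established upper bound to the dual norm $\|\cdot\|_{\UI}^*$ and using $\|\cdot\|_{\F}^*=\|\cdot\|_{\F}$. The hardest point is to check that the hypothesis transfers to the dual norm, or else to find a direct test matrix. I expect this step to require the most work.
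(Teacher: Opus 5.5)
Since the paper quotes this theorem from Romeo and Tilli without proof, I judge your argument on its own. It does not yet prove the statement.

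\textbf{A fixable flaw in the upper bound.} Your Hadamard reduction and the step $A=J_n/n$ are correct. The extension to a general diagonal via $A=xx^{\T}$ fails, because the rearrangement goes the other way. Minimizing the gauge of $(|x_ix_{\sigma(i)}|)_i$ over $\sigma$ favours pairing large entries with small ones, and replication does not prevent this. For $x=(1,0)^{\T}\otimes\bone_m$ you get $A=J_m\oplus 0_m$, and the block-swapping permutation has zero diagonal, so the inequality is vacuous. Take instead $A=x\bone^{\T}/\sqrt n$. It is rank one with $\|A\|_{\UI}=\|x\|_2$, and every permutation diagonal equals $x/\sqrt n$. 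Your reduction then gives $\|\diag(x)\|_{\UI}^2\le c_2\|x\|_2^2$ directly, after padding with zeros to reach a power of $2$.

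\textbf{The genuine gap is the lower bound, and it cannot be closed with the Hadamard $W$ at all.} With $|w_{ij}|\equiv n^{-1/2}$, the reduced inequality $c_2\|A\|_{\UI}^2\ge n\min_\sigma\|\diag(|a_{i\sigma(i)}|)\|_{\UI}^2$ holds for the spectral norm with $c_2=2$ and every $A$. To see this, let $\mu$ be the bottleneck value. Frobenius--K\"onig gives $\bbI,\bbJ$ with $|\bbI|+|\bbJ|=n+1$ and $|a_{ij}|\ge\mu$ on $\bbI\times\bbJ$, so $\|A\|_2^2\ge\mu^2\max(|\bbI|,|\bbJ|)\ge\mu^2(n+1)/2$. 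Yet $\|I_n\|_2=1\ll\sqrt n$. So no test matrix $A$ can help. Your $\alpha H$ reduces to $c_2\ge 1$, and $I\otimes J_k/k$ even has a zero permutation diagonal. The duality idea has no mechanism for transferring the hypothesis to the dual norm.

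\textbf{The missing idea.} You need a unitary that is \emph{small} on a block which every permutation must meet, so that $|g_{ij}|=|(W\circ G)_{ij}|/|w_{ij}|$ is large there. For $0\ne x\in\bbR^p$ with $x\ge 0$, set $u=p^{-1/2}\bone$ and $v=x/\|x\|_2$. Let $U_\perp,V_\perp\in\bbR^{p\times(p-1)}$ have orthonormal columns spanning $u^\perp$ and $v^\perp$, and let $F_p=[\omega^{(i-1)(j-1)}]$ with $\omega=e^{2\pi\mathrm{i}/p}$. Define
\[
W=\begin{pmatrix} uv^{\T} & U_\perp\\ V_\perp^{\T} & 0\end{pmatrix},\qquad
G=\begin{pmatrix}\|x\|_2F_p & 0\\ 0 & 0\end{pmatrix}.
\]
Then:
\begin{itemize}
\item $W$ is real orthogonal of order $2p-1$.
\item $W\circ G=(p^{-1/2}F_p\diag(x))\oplus 0$, whose singular values are $x_1,\ldots,x_p$ and zeros.
\item $W^{-1}\circ G^{\T}=(W\circ G)^{\T}$, because $W$ is real orthogonal.
\item Every permutation $\sigma$ of $\{1,\ldots,2p-1\}$ sends some $i\le p$ to some $\sigma(i)\le p$, so $\|P\circ G\|_{\UI}\ge\|P\circ G\|_2\ge\|x\|_2$.
\end{itemize}
The hypotheses therefore give $\|x\|_2\le c_1\|\diag(x)\|_{\UI}$ and $\|x\|_2^2\le c_2\|\diag(x)\|_{\UI}^2$, which are exactly the lower bounds you are missing.
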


In \Cref{thm:roti2001}, the minimization is restricted to  unitary matrices. Since the set of unitary matrices is
a subset of nonsingular matrices, an immediate corollary is that \Cref{thm:roti2001} remains valid if \eqref{eq:liconj-1'} and \eqref{eq:liconj-2'} are replaced with \eqref{eq:liconj-1} and \eqref{eq:liconj-2}, respectively.

\begin{corollary}\label{cor:roti2001}
If \eqref{eq:liconj-1} holds for any $G\in\bbC^{n\times n}$ and for some constant $c_1\ge 1$, independent of $n$, then
\eqref{eq:liconj-1'cond} holds. If \eqref{eq:liconj-2} holds for any $G\in\bbC^{n\times n}$ and for some constant $c_2\ge 1$, independent of $n$, then
\eqref{eq:liconj-2'cond} holds.
\end{corollary}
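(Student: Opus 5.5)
The plan is to reduce the corollary directly to \Cref{thm:roti2001} by monotonicity of a minimum under enlarging the feasible set. The key observation is the one already noted before the statement: every unitary matrix is nonsingular. Hence, for each fixed $G\in\bbC^{n\times n}$,
$$
\min_{W\ \mathrm{nonsingular}}\|W^{-1}\|_2\,\|W\circ G\|_{\UI}\le\min_{W\ \mathrm{unitary}}\|W^{-1}\|_2\,\|W\circ G\|_{\UI},
$$
and likewise
$$
\min_{W\ \mathrm{nonsingular}}\|W\circ G\|_{\UI}\,\|W^{-1}\circ G^{\T}\|_{\UI}\le\min_{W\ \mathrm{unitary}}\|W\circ G\|_{\UI}\,\|W^{-1}\circ G^{\T}\|_{\UI}.
$$
To be safe, I would read the left-hand sides as infima over the nonsingular set, since that set is not compact. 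The unitary minima are attained because the unitary group is compact and the objectives are continuous on it. The inequalities hold in either reading.

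For the first claim, assume \eqref{eq:liconj-1} holds for all $G$ and all $n$ with a constant $c_1\ge 1$ independent of $n$. Multiplying the first display by $c_1$ and chaining with \eqref{eq:liconj-1} gives
$$
c_1\min_{W\ \mathrm{unitary}}\|W^{-1}\|_2\,\|W\circ G\|_{\UI}\ge c_1\min_{W\ \mathrm{nonsingular}}\|W^{-1}\|_2\,\|W\circ G\|_{\UI}\ge\min_{P\ \mathrm{permutation}}\|P\circ G\|_{\UI}.
$$
This is exactly \eqref{eq:liconj-1'}, with the same size-independent constant. Part (a) of \Cref{thm:roti2001} then yields \eqref{eq:liconj-1'cond}.

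The second claim is identical. Multiply the second display by $c_2$, combine with \eqref{eq:liconj-2} to obtain \eqref{eq:liconj-2'}, and invoke part (b) of \Cref{thm:roti2001} to conclude \eqref{eq:liconj-2'cond}.

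There is no real obstacle here. The only points needing care are that the constants $c_1,c_2$ are carried over unchanged, so they stay independent of $n$, and that the hypothesis is used for every $n$ and every $G$, because \Cref{thm:roti2001} requires the inequality uniformly over all matrix sizes.
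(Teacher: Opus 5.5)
Your proposal is correct and is the same argument as the paper's: since unitary matrices are nonsingular, you chain $c_1\min_{W\ \mathrm{unitary}}\ge c_1\min_{W\ \mathrm{nonsingular}}\ge\min_{P}$ (and likewise with $c_2$) and then apply \Cref{thm:roti2001}(a) and (b). Reading the nonsingular minimum as an infimum is a harmless extra precaution that the paper does not bother to state.
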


\begin{proof}
If \eqref{eq:liconj-1} holds for any $G\in\bbC^{n\times n}$ and for some constant $c_1\ge 1$, independent of $n$, then
$$
c_1\cdot\min_{W\ \mathrm{unitary}}\|W^{-1}\|_2\,\|W\circ G\|_{\UI}
            \ge c_1\cdot\min_{W\ \mathrm{nonsingular}}\|W^{-1}\|_2\,\|W\circ G\|_{\UI}
            \ge\min_{P\ \mathrm{permutation}}\|P\circ G\|_{\UI},
$$
yielding \eqref{eq:liconj-1'} and therefore \eqref{eq:liconj-1'cond} by
\Cref{thm:roti2001}. Similarly, we can get the second claim in this corollary.
\end{proof}
 
Combining \Cref{thm:roti2001}(a) with either \eqref{eq:p1F:unitary} or \eqref{eq:p2F}, we conclude that
\begin{corollary}\label{cor:roti2001'}
Conjecture \eqref{eq:liconj-1'} holds if and only if \eqref{eq:liconj-1'cond} holds;
conjecture~\eqref{eq:liconj-1} holds if and only if \eqref{eq:liconj-1'cond}   holds.
\end{corollary}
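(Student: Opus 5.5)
The plan is to prove each equivalence in two directions, reading ``conjecture \eqref{eq:liconj-1'} holds'' and ``\eqref{eq:liconj-1'cond} holds'' as ``there exists some constant $c_1\ge 1$, independent of $n$, for which the inequality is valid for all $G$.'' The constant will change between the two directions.

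\emph{Necessity.} This direction is already available. If \eqref{eq:liconj-1'} holds, then \Cref{thm:roti2001}(a) gives \eqref{eq:liconj-1'cond} directly. If \eqref{eq:liconj-1} holds, then \Cref{cor:roti2001} gives \eqref{eq:liconj-1'cond}.

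\emph{Sufficiency.} Assume $c^{-1}\|\cdot\|_{\F}\le\|\cdot\|_{\UI}\le c\|\cdot\|_{\F}$ for some $c\ge1$. Let $W$ be any nonsingular matrix, and apply the lower equivalence bound to $W\circ G$. Then use \eqref{eq:p2F}, and finally the upper equivalence bound applied to the minimizing $P\circ G$:
\begin{equation*}
\|W^{-1}\|_2\,\|W\circ G\|_{\UI}
\ge c^{-1}\|W^{-1}\|_2\,\|W\circ G\|_{\F}
\ge c^{-1}\min_{P}\|P\circ G\|_{\F}
\ge c^{-2}\min_{P}\|P\circ G\|_{\UI}.
\end{equation*}
Taking the minimum over nonsingular $W$ yields \eqref{eq:liconj-1} with constant $c^2$, which is still independent of $n$ and $G$.

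For the unitary version \eqref{eq:liconj-1'}, we have $\|W^{-1}\|_2=1$ for unitary $W$. So the same chain works with \eqref{eq:p1F:unitary} in place of \eqref{eq:p2F}. Alternatively, \eqref{eq:liconj-1'} follows from \eqref{eq:liconj-1}, since a minimum over the larger set of nonsingular matrices is no larger than the minimum over unitary matrices.

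\emph{Main obstacle.} There is no real difficulty here. The only point needing care is the bookkeeping of constants: sufficiency produces the constant $c^2$, not $c$. The statement should therefore be read as existence of some size-independent constant, and the proof should say this explicitly.
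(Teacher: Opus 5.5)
Your proof is correct and follows the paper's route: necessity from \Cref{thm:roti2001}(a) and \Cref{cor:roti2001}, and sufficiency from the lower norm-equivalence bound together with \eqref{eq:p1F:unitary} or \eqref{eq:p2F}. Your added use of the upper bound on the minimizing $P\circ G$, which gives the constant $c^2$ under the existential reading, makes explicit a step that the paper's one-line sufficiency argument passes over, since that argument displays only the lower bound and keeps the same $c_1$.
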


\begin{proof}
That conjecture \eqref{eq:liconj-1'} implies \eqref{eq:liconj-1'cond} is the conclusion
of \Cref{thm:roti2001}(a), and that \eqref{eq:liconj-1} implies \eqref{eq:liconj-1'cond} follows from
\Cref{cor:roti2001}. On the other hand, if \eqref{eq:liconj-1'cond} holds, then
$$
c_1\|W^{-1}\|_2\,\|W\circ G\|_{\UI}\ge\|W^{-1}\|_2\,\|W\circ G\|_{\F},
$$
minimizing both sides of which over unitary (resp., nonsingular) $W$, with the help of
\eqref{eq:p1F:unitary} (resp., \eqref{eq:p2F}), we arrive at \eqref{eq:liconj-1'}
(resp., \eqref{eq:liconj-1}).
\end{proof}

%%%%%%%%%%%
\section{Preliminaries}
\label{sec:notation}
The singular values of a matrix $X\in\bbC^{n\times n}$ are denoted by
$\sigma_1(X)\ge\cdots\ge\sigma_n(X)$, arranged in nonincreasing order.
For $1\le p<\infty$, the Schatten $p$-norm is
$$
\|X\|_{S_p}=\Biggl(\sum_{i=1}^n\bigl[\sigma_i(X)\bigr]^p\Biggr)^{1/p},
$$
while $\|X\|_{S_\infty}=\lim_{p\to\infty}\|X\|_{S_p}=\sigma_1(X)=\|X\|_2$.
In particular $\|X\|_{\F}=\|X\|_{S_2}$, and the trace norm, also known as the nuclear norm, is
$\|X\|_{S_1}=\sum_{i=1}^n\sigma_i(X)$.
Write $\lambda_1(X),\ldots,\lambda_n(X)$ for the eigenvalues of $X$,
counted with algebraic multiplicity and taken in any order.

Weyl's majorization of  the absolute values of the eigenvalues by the singular values
\cite[p.42]{bhat:1997}
asserts that, for any $X\in\bbC^{n\times n}$,
\begin{equation}
\label{eq:weyl}
\sum_{i=1}^n\bigl|\lambda_i(X)\bigr|
\le\sum_{i=1}^n\sigma_i(X)=\|X\|_{S_1}.
\end{equation} 
We also have, the so-called H\"older's inequality for Schatten norms,
\begin{equation}
\label{eq:holder}
\|XY\|_{S_1}\le\|X\|_{S_p}\,\|Y\|_{S_q}
\end{equation}
for any $1\le p,\, q\le\infty$ such that $1/p+1/q=1$.
A quick proof of \eqref{eq:holder} is to use von Neumann's trace inequality \cite{neum:1937} (see also \cite[p.183]{hojo:1991}) and
H\"older's inequality. Let $U$ be some unitary matrix ix such that $XYU$ is Hermitian and positive semidefinite and has the singular values of $XY$ as its eigenvalues. We have
\begin{align*}
\|XY\|_{S_1} =\tr(XYU)
   &\le\sum_i\sigma_i(X)\sigma_i(YU) \qquad\mbox{(use von Neumann's trace inequality)}\\
   &=\sum_i\sigma_i(X)\sigma_i(Y) \qquad\mbox{(use $\sigma_i(YU)=\sigma_i(Y)$)} \\
   &\le\Big(\sum_i[\sigma_i(X)]^p\Big)^{1/p}\Big(\sum_i[\sigma_i(Y)]^q\Big)^{1/q},
                  \qquad\mbox{(use H\"older's inequality)}
\end{align*}
as was to be shown, where $\tr(\cdot)$ takes the trace of a square matrix. Letting $p=q=2$ in \eqref{eq:holder} leads to
\begin{equation}
\label{eq:holder-F}
\|XY\|_{S_1}\le\|X\|_{\F}\,\|Y\|_{\F}
\end{equation}
which we will use below.

Let $\bbN_n=\{1,2,\ldots,n\}$.
For $\bbI\subset\bbN_n$ we write $|\bbI|$ for the
cardinality of $\bbI$
and let $E_{\bbI}\in\bbC^{n\times n}$ be the diagonal matrix whose $i$th diagonal
entry is $1$
if $i\in\bbI$ and $0$ otherwise, i.e.,
$E_{\bbI}$ is the orthogonal projection onto the coordinate
subspace $\cR(E_{\bbI})=\mathrm{span}\{\be_i:i\in\bbI\}$.
We write $\bbP_n$ for the set of $n\times n$ permutation matrices.
If $P\in\bbP_n$ corresponds to a permutation $\tau$ of
$\bbN_n$, then
\begin{equation}
\label{eq:assign}
\|P\circ G\|_{\F}^2=\sum_{i=1}^n\bigl|g_{i,\tau(i)}\bigr|^2,
\end{equation}
where $G\equiv [g_{ij}]$.
Thus the right-hand side of~\eqref{eq:main} is the minimal assignment cost
associated with the nonnegative matrix $C=[|g_{ij}|^2]$.

For $x,y\ge 0$, we have
\begin{equation}\label{eq:amgm}
\sqrt{xy}=\inf_{t>0}\frac12\bigl(tx+t^{-1}y\bigr).
\end{equation}
Indeed, for every $t>0$,
\begin{equation}\label{eq:amgm'}
\frac12\bigl(tx+t^{-1}y\bigr)-\sqrt{xy}
=\frac12\Bigl(\sqrt{tx}-\sqrt{t^{-1}y}\Bigr)^2\ge 0,
\end{equation}
so $\sqrt{xy}$ does not exceed the infimum on the right of~\eqref{eq:amgm}.
If $x>0$ and $y>0$, the squared term in \eqref{eq:amgm'} vanishes at $t=\sqrt{y/x}$, and the
infimum is a minimum.
If $x=0$, then $\frac12 t^{-1}y\to 0$ as $t\to\infty$;
if $y=0$, then $\frac12 tx\to 0$ as $t\to 0^+$.
In either case the infimum is $0=\sqrt{xy}$.

%\section{Doubly superstochastic matrices}
%\label{sec:dss}

Next we introduce the notion of a {\em doubly superstochastic matrix}
(see \cite[Chapter~2, Section~D]{marshall:2011} and also
\cite{ando:1989}).

\begin{definition}
\label{def:dss}
A nonnegative matrix $H\in\bbR^{n\times n}$ is called {\em doubly superstochastic}
if there exists a doubly stochastic matrix $S$ such that $0\le S\le H$
entrywise.
\end{definition}

A matrix is doubly stochastic when it is nonnegative and every row sum and
every column sum equals one. According to this definition,
row and column sums of a doubly superstochastic matrix are necessarily at
least one, but that necessary condition is not
sufficient~\cite{marshall:2011}.
A complete criterion is given in the next lemma.
%due to Gale~\cite{gale:1957}.
%Necessity follows from the definition by a direct estimate; the converse is
%Gale's feasibility theorem~\cite[p.1075]{gale:1957}.

\begin{lemma}[{\cite[Corollary~3.4]{ando:1989}}]
\label{lem:cut}
A nonnegative matrix $H=[h_{ij}]\in\bbR^{n\times n}$ is doubly superstochastic if
and only if
\begin{equation}
\label{eq:cut}
\sum_{i\in\bbI,\,j\in\bbJ}h_{ij}\ge |\bbI|+|\bbJ|-n
\end{equation}
for all subsets $\bbI,\bbJ\subset\bbN_n$.
\end{lemma}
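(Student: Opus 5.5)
The plan is to prove the two directions separately: a direct estimate for necessity, and a max-flow/min-cut argument for sufficiency.

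\emph{Necessity.} Suppose $0\le S\le H$ entrywise with $S=[s_{ij}]$ doubly stochastic, and fix $\bbI,\bbJ\subset\bbN_n$. Since every row sum of $S$ is $1$,
$$
\sum_{i\in\bbI,\,j\in\bbJ}h_{ij}\ \ge\ \sum_{i\in\bbI,\,j\in\bbJ}s_{ij}
=|\bbI|-\sum_{i\in\bbI,\,j\notin\bbJ}s_{ij}.
$$
Since every column sum of $S$ is $1$ and $S\ge 0$,
$$
\sum_{i\in\bbI,\,j\notin\bbJ}s_{ij}\ \le\ \sum_{j\notin\bbJ}\sum_{i=1}^n s_{ij}=n-|\bbJ|.
$$
Combining the two displays gives \eqref{eq:cut}.

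\emph{Sufficiency.} Build a capacitated network with the following nodes and arcs:
\begin{itemize}
\item a source $s$, row nodes $r_1,\dots,r_n$, column nodes $c_1,\dots,c_n$, and a sink $t$;
\item arcs $s\to r_i$ of capacity $1$;
\item arcs $r_i\to c_j$ of capacity $h_{ij}$;
\item arcs $c_j\to t$ of capacity $1$.
\end{itemize}
Suppose a flow of value $n$ exists, and let $s_{ij}$ be the flow on $r_i\to c_j$. Value $n$ forces every source arc and every sink arc to be saturated. Flow conservation then gives row sums and column sums of $S=[s_{ij}]$ equal to $1$, and the capacities give $0\le s_{ij}\le h_{ij}$. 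So $S$ is the required doubly stochastic minorant of $H$.

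By the max-flow/min-cut theorem for real capacities, it suffices to show that every $s$--$t$ cut has capacity at least $n$. Take a cut whose source side contains the rows indexed by $\bbI$ and the columns indexed by $\bbN_n\setminus\bbJ$. The arcs leaving the source side are:
\begin{itemize}
\item $s\to r_i$ for $i\notin\bbI$, contributing $n-|\bbI|$;
\item $r_i\to c_j$ for $i\in\bbI$, $j\in\bbJ$, contributing $\sum_{i\in\bbI,j\in\bbJ}h_{ij}$;
\item $c_j\to t$ for $j\notin\bbJ$, contributing $n-|\bbJ|$.
\end{itemize}
The cut capacity is therefore
$$
(n-|\bbI|)+\sum_{i\in\bbI,\,j\in\bbJ}h_{ij}+(n-|\bbJ|)\ \ge\ n
$$
by \eqref{eq:cut}. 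Hence the maximum flow equals $n$, since the source cut $\{s\}$ alone shows it is at most $n$, and $H$ is doubly superstochastic.

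The main point to get right is this translation between cuts and the pairs $(\bbI,\bbJ)$, in particular that $\bbJ$ is the \emph{complement} of the column set on the source side. One also needs max-flow/min-cut with real (not integer) capacities, which is standard via LP duality or compactness. An alternative route to the same conclusion is LP duality applied directly to the transportation polytope $\{0\le S\le H,\ S\bone=\bone,\ S^{\T}\bone=\bone\}$, in the spirit of Gale's feasibility theorem.
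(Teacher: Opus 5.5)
Your proof is correct. The necessity estimate is right, and for sufficiency every $s$--$t$ cut does have the form you describe (rows $\bbI$ and columns $\bbN_n\setminus\bbJ$ on the source side), which is worth stating explicitly rather than writing ``take a cut''; the bound $2n-|\bbI|-|\bbJ|+\sum_{i\in\bbI,\,j\in\bbJ}h_{ij}\ge n$ together with real-capacity max-flow/min-cut then produces the doubly stochastic minorant.

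The paper gives no proof of its own and simply cites Ando's Corollary~3.4; a suppressed remark in its source attributes necessity to a direct estimate and sufficiency to Gale's feasibility theorem. Your argument therefore follows the same approach, made self-contained, with max-flow/min-cut doing the work that Gale's theorem does in the cited route.
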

 
\begin{lemma}
\label{lem:cost}
Let $H$ be doubly superstochastic and let $C=[c_{ij}]$ be nonnegative.
Then
\begin{equation}
\label{eq:cost}
\sum_{i,j}h_{ij}c_{ij}
\ge\min_{\tau}\sum_{i=1}^n c_{i,\tau(i)},
\end{equation}
the minimum being taken over all permutations $\tau$ of $\bbN_n$.
\end{lemma}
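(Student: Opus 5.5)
Since $H$ is doubly superstochastic, \Cref{def:dss} supplies a doubly stochastic matrix $S=[s_{ij}]$ with $0\le S\le H$ entrywise. Because every $c_{ij}\ge 0$, multiplying the entrywise inequality $s_{ij}\le h_{ij}$ by $c_{ij}$ and summing gives
\[
\sum_{i,j}h_{ij}c_{ij}\ \ge\ \sum_{i,j}s_{ij}c_{ij}.
\]
It therefore suffices to show that the right-hand side is at least the minimal assignment cost.

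Next I would invoke Birkhoff's theorem: the doubly stochastic matrices form the convex hull of the permutation matrices. So we can write
\[
S=\sum_{k}\theta_k P_k,\qquad \theta_k\ge 0,\qquad \sum_k\theta_k=1,
\]
where $P_k\in\bbP_n$ corresponds to the permutation $\tau_k$. Since the map $S\mapsto\sum_{i,j}s_{ij}c_{ij}$ is linear,
\[
\sum_{i,j}s_{ij}c_{ij}=\sum_k\theta_k\sum_{i=1}^n c_{i,\tau_k(i)}.
\]
This is a convex combination of assignment costs, so it is no smaller than the smallest of them, namely $\min_\tau\sum_i c_{i,\tau(i)}$. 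That yields \eqref{eq:cost}.

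There is no serious obstacle here. The only external ingredient is Birkhoff's theorem. As an alternative to it, one can note that the minimum of a linear function over the Birkhoff polytope is attained at a vertex, and the vertices are exactly the permutation matrices. The one point to check is where nonnegativity of $C$ is used: it is needed precisely in the first reduction from $H$ to $S$. Without it, making entries larger could lower the cost and the argument would fail.
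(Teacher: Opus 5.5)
Your proof is correct and is essentially the paper's argument. Like the paper, you use $C\ge 0$ to pass from $H$ to a doubly stochastic $S\le H$, then apply the Birkhoff--von Neumann theorem to write the cost against $S$ as a convex combination of assignment costs, which is bounded below by the minimum.
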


\begin{proof}
There is a doubly stochastic matrix $S$ such that $0\le S\le H$.
Since $C\ge 0$,
$$
\sum_{i,j}h_{ij}c_{ij}\ge\sum_{i,j}s_{ij}c_{ij}.
$$
By the Birkhoff--von Neumann theorem~\cite{birkhoff:1946,vonneumann:1953},
$S$ is a convex combination of permutation matrices.
The right-hand side is therefore a convex combination of assignment costs,
and~\eqref{eq:cost} follows.
\end{proof}

The next lemma is the analytic heart of the argument.
For real nonsingular $W$ it is a special case of a
theorem\footnote {Apply \cite[Theorem~6]{bhja:2015} with $A=t^{1/2}W$, $G=t^{-1/2}W^{-\T}$, and $B=C=0$ there.}
of
Bhatia and
Jain on symplectic matrices \cite[Theorem~6]{bhja:2015}.
% (see section~\ref{sec:remarks}).
The proof given here does not use the symplectic group, and it applies to
every complex nonsingular $W$.

\begin{lemma}
\label{lem:Ht}
Let $W\equiv[w_{ij}]\in\bbC^{n\times n}$ be nonsingular and let $t>0$.
Then the matrix
\begin{equation}
\label{eq:Ht}
H(t)=\frac12\bigl(t\,|W|^{\circ 2}+t^{-1}|W^{-\T}|^{\circ 2}\bigr)
   \equiv\frac 12\bigl[t\,|w_{ij}|^2+t^{-1}\bigl|(W^{-1})_{ji}\bigr|^2\bigr]
\end{equation}
is doubly superstochastic,
where $(W^{-1})_{ji}$ denotes the $(j,i)$th entry of $W^{-1}$, $|W|$ takes entrywise absolute value, and
$|W|^{\circ 2}=|W|\circ |W|$.
\end{lemma}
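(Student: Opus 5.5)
We will verify the criterion of \Cref{lem:cut}: for all $\bbI,\bbJ\subset\bbN_n$,
$$
\sum_{i\in\bbI,\,j\in\bbJ}h_{ij}(t)\ge |\bbI|+|\bbJ|-n .
$$
There is nothing to prove when $|\bbI|+|\bbJ|\le n$, so assume $k=|\bbI|+|\bbJ|-n>0$. Set $V=W^{-\T}$, so that $v_{ij}=(W^{-1})_{ji}$. In terms of the coordinate projections, the left-hand side is
$$
\frac12\bigl(t\,\|E_{\bbI}WE_{\bbJ}\|_{\F}^2+t^{-1}\|E_{\bbI}VE_{\bbJ}\|_{\F}^2\bigr).
$$
By \eqref{eq:amgm'} this is at least $\|E_{\bbI}WE_{\bbJ}\|_{\F}\,\|E_{\bbI}VE_{\bbJ}\|_{\F}$, for every $t>0$. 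It therefore suffices to prove the $t$-free inequality
$$
\|E_{\bbI}WE_{\bbJ}\|_{\F}\,\|E_{\bbI}VE_{\bbJ}\|_{\F}\ge k .
$$

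To prove this, we pair the two blocks through a product whose trace is forced by $W^{-1}W=I$. Put $A=E_{\bbI}WE_{\bbJ}$ and $B=E_{\bbJ}W^{-1}E_{\bbI}=(E_{\bbI}VE_{\bbJ})^{\T}$; transposition does not change the Frobenius norm. By \eqref{eq:holder-F} and \eqref{eq:weyl},
$$
\|A\|_{\F}\|B\|_{\F}\ge\|BA\|_{S_1}\ge|\tr(BA)| .
$$
Next compute
$$
\tr(BA)=\tr(E_{\bbJ}W^{-1}E_{\bbI}WE_{\bbJ})=\tr(E_{\bbJ}W^{-1}E_{\bbI}W).
$$
Write $E_{\bbI}=I-E_{\bbI^c}$, where $\bbI^c=\bbN_n\setminus\bbI$. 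This gives
$$
\tr(BA)=|\bbJ|-\tr(E_{\bbJ}W^{-1}E_{\bbI^c}W).
$$
The trace alone will not directly give a lower bound of $k$, since the second term can be a large complex number. So the argument should not rely on $|\tr(BA)|$ alone.

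The better route is to use eigenvalues, which \eqref{eq:weyl} allows. Consider $M=E_{\bbJ}W^{-1}E_{\bbI}WE_{\bbJ}$, restricted to $\cR(E_{\bbJ})$, which has dimension $|\bbJ|$. Let $\mathcal{S}$ be the set of $x\in\cR(E_{\bbJ})$ with $Wx\in\cR(E_{\bbI})$. Since $W$ is nonsingular, $W\cR(E_{\bbJ})$ has dimension $|\bbJ|$, and its intersection with $\cR(E_{\bbI})$ has dimension at least $|\bbI|+|\bbJ|-n=k$. Hence $\dim\mathcal{S}\ge k$. For $x\in\mathcal{S}$,
$$
Mx=E_{\bbJ}W^{-1}E_{\bbI}Wx=E_{\bbJ}W^{-1}Wx=E_{\bbJ}x=x .
$$
So $M$ has eigenvalue $1$ with geometric multiplicity at least $k$, and in particular $\sum_i|\lambda_i(M)|\ge k$. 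Note that $M=BA$ exactly, because $B=E_{\bbJ}W^{-1}E_{\bbI}$ and $A=E_{\bbI}WE_{\bbJ}$. Applying \eqref{eq:weyl} and then \eqref{eq:holder-F} gives
$$
k\le\sum_i|\lambda_i(BA)|\le\|BA\|_{S_1}\le\|A\|_{\F}\|B\|_{F}
=\|E_{\bbI}WE_{\bbJ}\|_{\F}\,\|E_{\bbI}VE_{\bbJ}\|_{\F}.
$$
Combining this with the AM--GM step yields \eqref{eq:cut} for $H(t)$, so \Cref{lem:cut} shows that $H(t)$ is doubly superstochastic.

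The main obstacle is the dimension-counting step, which locates $k$ unit eigenvalues of $BA$ via the subspace $W^{-1}\cR(E_{\bbI})\cap\cR(E_{\bbJ})$. Everything else is routine bookkeeping with the projections and the inequalities \eqref{eq:amgm'}, \eqref{eq:weyl} and \eqref{eq:holder-F}.
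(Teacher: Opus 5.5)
Your proof is correct and is essentially the paper's argument: the same cut criterion, the same AM--GM reduction to $\|A\|_{\F}\|B\|_{\F}\ge k$, and the same dimension count followed by Weyl and H\"older. The only difference is that you use $BA$ on $W^{-1}\cR(E_{\bbI})\cap\cR(E_{\bbJ})$, whereas the paper uses $AB$ on $\mathcal{L}=\cR(E_{\bbI})\cap W(\cR(E_{\bbJ}))$; this is immaterial, and the abandoned trace detour can simply be deleted.
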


\begin{proof}
By Lemma~\ref{lem:cut}, it suffices to verify~\eqref{eq:cut} for every pair
of index sets $\bbI,\bbJ\subset\bbN_n$ with $|\bbI|+|\bbJ|\ge n$.
Assume therefore that
$$
k:=|\bbI|+|\bbJ|-n>0.
$$
Set
$A=E_{\bbI}W E_{\bbJ}$ and $B=E_{\bbJ}W^{-1}E_{\bbI}$.
Then $A,\,B\in\bbC^{n\times n}$, and a direct inspection of entries gives
$$
\|A\|_{\F}^2
=\sum_{i\in\bbI,\,j\in\bbJ}|w_{ij}|^2,\quad
\|B\|_{\F}^2
=\sum_{i\in\bbI,\,j\in\bbJ}\bigl|(W^{-1})_{ji}\bigr|^2.
$$
Write $H(t)\equiv [h_{ij}(t)]$ as in \eqref{eq:Ht}. We have
\begin{equation}
\label{eq:block}
\sum_{i\in\bbI,\,j\in\bbJ}h_{ij}(t)
=\frac12\Bigl(t\,\|A\|_{\F}^2+t^{-1}\|B\|_{\F}^2\Bigr)
\ge\|A\|_{\F}\,\|B\|_{\F},
\end{equation}
where the last step follows from~\eqref{eq:amgm} with $x=\|A\|_{\F}^2$ and
$y=\|B\|_{\F}^2$.

Consider the subspace
$$
\mathcal{L}:=\cR(E_{\bbI})\cap W(\cR(E_{\bbJ}))\subset\bbC^n.
$$
Since $W$ is nonsingular, $\dim W(\cR(E_{\bbJ}))=|\bbJ|$.
The dimension formula for a pair of subspaces of $\bbC^n$ therefore yields
\begin{align*}
n\ge \dim(\cR(E_{\bbI})+ W(\cR(E_{\bbJ})))&=\dim\cR(E_{\bbI})+\dim W(\cR(E_{\bbJ}))-\dim\mathcal{L} \\
   &=|\bbI|+|\bbJ|-\dim\mathcal{L},
\end{align*}
leading to
$$ %\begin{equation}\label{eq:dim}
\dim\mathcal{L}
\ge |\bbI|+|\bbJ|-n=k>0.
$$ %\end{equation}
In particular $\mathcal{L}\neq\{0\}$, so neither $A$ nor $B$ can vanish.

Now let $\bx\in\mathcal{L}$. Then $\bx\in\cR(E_{\bbI})$ and thus $E_{\bbI}\bx=\bx$, and
also $\bx\in W(\cR(E_{\bbJ}))$ and thus there exists
$\by\in\cR(E_{\bbJ})$ (implying $\by=E_{\bbJ}\by$) such that $\bx=W\by$ (so $\by=W^{-1}\bx$).
Thus $\by=E_{\bbJ}\by=W^{-1}\bx$, and therefore
$$
AB\bx
=E_{\bbI}W \underbrace{E_{\bbJ}^2}_{=E_{\bbJ}}W^{-1}\underbrace{E_{\bbI}\bx}_{=\bx}
=E_{\bbI}W E_{\bbJ}\underbrace{W^{-1}\bx}_{=\by}
=E_{\bbI}W \underbrace{E_{\bbJ}\by}_{=\by}
=E_{\bbI}\underbrace{W\by}_{=\bx}
=E_{\bbI}\bx
=\bx.
$$
So $AB$ acts as the identity on $\mathcal{L}$.
The geometric multiplicity of the eigenvalue $1$ of $AB$ is therefore at
least $\dim\mathcal{L}\ge k$.
The algebraic multiplicity is at least as large as the geometric
multiplicity, and we conclude that at least $k$ eigenvalues of $AB$,
counted algebraically, are equal to $1$.
Consequently
$$ %\begin{equation}\label{eq:ev}
\sum_{\ell=1}^n\bigl|\lambda_\ell(AB)\bigr|\ge k.
$$ %\end{equation}
Inequalities~\eqref{eq:weyl} and~\eqref{eq:holder-F} now give
\begin{equation}
\label{eq:nuclear}
k
\le\sum_{\ell=1}^n\bigl|\lambda_\ell(AB)\bigr|
\le\|AB\|_{S_1}
\le\|A\|_{\F}\,\|B\|_{\F}.
\end{equation}
Combining~\eqref{eq:block} with~\eqref{eq:nuclear}, we obtain
$$
\sum_{i\in\bbI,\,j\in\bbJ}h_{ij}(t)\ge k=|\bbI|+|\bbJ|-n.
$$
By Lemma \ref{lem:cut}, $H(t)$ is doubly superstochastic.
\end{proof}

Two comments on the matrices $A$ and $B$ in the proof above are in order.
First, they are regarded as operators on the same space $\bbC^n$, so that
the product $AB$, its eigenvalues, and $\|AB\|_{S_1}$ are unambiguously
defined.
Their nonzero blocks coincide with the ordinary submatrices of $W$ and of
$W^{-1}$ indexed by $\bbI\times\bbJ$ and $\bbJ\times\bbI$, respectively.
Second, $AB$ need not be normal.
The passage from eigenvalues to singular values in~\eqref{eq:nuclear} uses
only~\eqref{eq:weyl}.

%%%%%%%%%%%%%
\section{Proof of \Cref{thm:main}}
\label{sec:proof}

Let $C=[c_{ij}]$ where $c_{ij}=|g_{ij}|^2\ge 0$.
For an arbitrary nonsingular $W$, set
\begin{align}
x&:=\|W\circ G\|_{\F}^2
=\sum_{i,j}|w_{ij}|^2c_{ij}, \label{eq:xy-x}\\
y&:=\|W^{-1}\circ G^{\T}\|_{\F}^2
=\sum_{i,j}\bigl|(W^{-1})_{ij}\bigr|^2|g_{ji}|^2
=\sum_{i,j}\bigl|(W^{-1})_{ji}\bigr|^2c_{ij}. \label{eq:xy-y}
\end{align}
The last equality is a relabeling of the summation indices.
Equivalently, $y=\|W^{-\T}\circ G\|_{\F}^2$.

We first assume $x>0$ and $y>0$.
For each $t>0$, \Cref{lem:cost,lem:Ht} give
\begin{equation}
\label{eq:preopt}
\frac12\bigl(tx+t^{-1}y\bigr)
=\sum_{i,j}h_{ij}(t)\,c_{ij}
\ge\min_{\tau}\sum_{i=1}^n c_{i,\tau(i)}
=\min_{P\in\bbP_n}\|P\circ G\|_{\F}^2.
\end{equation}
Taking the infimum over $t>0$ and using~\eqref{eq:amgm}, we obtain
\begin{equation}
\label{eq:lower}
\|W\circ G\|_{\F}\,\|W^{-1}\circ G^{\T}\|_{\F}
=\sqrt{xy}
\ge\min_{P\in\bbP_n}\|P\circ G\|_{\F}^2.
\end{equation}

It remains to treat the cases in which $x=0$ or $y=0$.
Suppose $x=0$. Then $w_{ij}\neq 0$ implies $g_{ij}=0$.
Since $W$ is nonsingular, $\det W\neq 0$.
In the Leibniz expansion of $\det W$, at least one summand is therefore
nonzero, and there exists a permutation $\tau$ such that
$w_{i,\tau(i)}\neq 0$ for every $i$.
Hence $g_{i,\tau(i)}=0$ for every $i$, and thus
$\min_{P\in\bbP_n}\|P\circ G\|_{\F}^2=0$.
The inequality~\eqref{eq:lower} continues to hold.
If $y=0$, the same argument applied to the nonsingular matrix
$W^{-\T}$ yields the same conclusion, because
$y=\|W^{-\T}\circ G\|_{\F}^2$.

Thus~\eqref{eq:lower} is valid for every nonsingular $W$.
Taking the infimum over $W$ gives
\begin{equation}
\label{eq:oneside}
\inf_{W\ \mathrm{nonsingular}}
\|W\circ G\|_{\F}\,\|W^{-1}\circ G^{\T}\|_{\F}
\ge\min_{P\in\bbP_n}\|P\circ G\|_{\F}^2.
\end{equation}

On the other hand, it is trivial to see that the left-hand side
of~\eqref{eq:oneside} is no bigger than the right-hand side, because the
permutation matrices are a subset of the nonsingular ones.
More explicitly, let $P\in\bbP_n$ attain the minimum on the right of
\eqref{eq:main}.
Then $P^{-1}=P^{\T}$, and
\begin{align*}
\|P\circ G\|_{\F}\,\|P^{-1}\circ G^{\T}\|_{\F}
&=\|P\circ G\|_{\F}\,\|P^{\T}\circ G^{\T}\|_{\F}
=\|P\circ G\|_{\F}\,\|(P\circ G)^{\T}\|_{\F}\\
&=\|P\circ G\|_{\F}^2
=\min_{Q\in\bbP_n}\|Q\circ G\|_{\F}^2.
\end{align*}
Hence the infimum in~\eqref{eq:oneside} is a minimum, it is attained at $P$,
and~\eqref{eq:main} follows.

%%%%%%%%%%%%
\section{Discussion}\label{sec:discussion}
Similarly to \Cref{cor:roti2001'}, \Cref{thm:main} implies the converse of
\Cref{thm:roti2001}(b).
Combining \Cref{thm:roti2001}(b) with   \eqref{eq:main}, we conclude
\Cref{cor:main} below.
%\eqref{eq:liconj-2'} and \eqref{eq:liconj-2'cond} are equivalent and that
%\eqref{eq:liconj-2} and \eqref{eq:liconj-2'cond} are equivalent.

\begin{corollary}\label{cor:main}
Conjecture \eqref{eq:liconj-2'} holds if and only if \eqref{eq:liconj-2'cond} holds;
conjecture~\eqref{eq:liconj-2} holds if and only if \eqref{eq:liconj-2'cond} holds.
\end{corollary}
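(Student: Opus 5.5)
The plan is to mirror the proof of \Cref{cor:roti2001'} and prove the two equivalences in \Cref{cor:main} in parallel. The forward directions are already available. \Cref{thm:roti2001}(b) says that \eqref{eq:liconj-2'} implies \eqref{eq:liconj-2'cond}. The second half of \Cref{cor:roti2001} says that \eqref{eq:liconj-2} implies \eqref{eq:liconj-2'cond}, because the minimum over nonsingular $W$ is no larger than the minimum over unitary $W$. So only the converses need work, and \Cref{thm:main} together with \eqref{eq:p2F:unitary} will supply them.

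For the converse, assume \eqref{eq:liconj-2'cond}, so that $\|X\|_{\F}\le c_2^{1/2}\|X\|_{\UI}$ and $\|X\|_{\UI}\le c_2^{1/2}\|X\|_{\F}$ for all $X$. For any nonsingular $W$, apply the lower bound to each factor:
$$
c_2\,\|W\circ G\|_{\UI}\,\|W^{-1}\circ G^{\T}\|_{\UI}\ge\|W\circ G\|_{\F}\,\|W^{-1}\circ G^{\T}\|_{\F}.
$$
Minimizing over nonsingular $W$ and invoking \eqref{eq:main} bounds the left-hand side from below by $\min_P\|P\circ G\|_{\F}^2$. If we instead minimize over unitary $W$, \eqref{eq:p2F:unitary} gives the same lower bound.

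It remains to pass from $\|P\circ G\|_{\F}^2$ to $\|P\circ G\|_{\UI}^2$. The constant bookkeeping is the one delicate point. Converting $\min_P\|P\circ G\|_{\F}^2$ into $\min_P\|P\circ G\|_{\UI}^2$ through \eqref{eq:liconj-2'cond} costs another factor $c_2$, so \eqref{eq:liconj-2'} and \eqref{eq:liconj-2} would hold with constant $c_2^2$ rather than $c_2$.

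Since the conjecture only asks for \emph{some} constant independent of $n$ and $G$, I would state the equivalence qualitatively: the existence of such a constant on one side is equivalent to the existence of one on the other, possibly different. This is how \Cref{cor:roti2001'} should also be read. With that reading, the unitary and nonsingular cases together complete both "if and only if" statements.
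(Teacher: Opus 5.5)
Your proof is correct and follows the paper's argument. The forward directions come from \Cref{thm:roti2001}(b) and \Cref{cor:roti2001}, and the converses come from the lower Frobenius bound in \eqref{eq:liconj-2'cond} combined with \eqref{eq:main} or \eqref{eq:p2F:unitary}. You also make explicit the final step $\min_P\|P\circ G\|_{\F}^2\ge c_2^{-1}\min_P\|P\circ G\|_{\UI}^2$, which the paper leaves unstated. As you note, this step yields the constant $c_2^2$, so the equivalence (like that in \Cref{cor:roti2001'}) holds in the existential sense, which is how you read it.
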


\begin{proof}
That conjecture \eqref{eq:liconj-2'} implies \eqref{eq:liconj-2'cond} is the conclusion
of \Cref{thm:roti2001}(b), and that \eqref{eq:liconj-2} implies \eqref{eq:liconj-2'cond} follows from
\Cref{cor:roti2001}. On the other hand, if \eqref{eq:liconj-2'cond} holds, then
$$
c_2\|W\circ G\|_{\UI}\,\|W^{-1}\circ G^{\T}\|_{\UI}\ge\|W\circ G\|_{\F}\,\|W^{-1}\circ G^{\T}\|_{\F},
$$
minimizing both sides of which over unitary (resp., nonsingular) $W$, with the help of
\eqref{eq:p2F:unitary} (resp., \eqref{eq:main}), we arrive at \eqref{eq:liconj-2'}
(resp., \eqref{eq:liconj-2}).
\end{proof}

\Cref{conj:li1998}, formally posed in \cite{li:1998}, was inspired by various eigenvalue perturbation results and stems from the reformulations (see also \cite{li:1993a}) of these results. In these reformulations, $\rank(G)\le 2$, so the inequalities as conjectured, if proven, are much more general than what may straightforwardly be implied by the corresponding perturbation results.
With \Cref{thm:main}, the conjectures for the case of the Frobenius norm have all been proven true with the corresponding constant $c_i=1$, implying the two quantities in each of the three problems in \cref{sec:intro} are actually equal.

Those results for the case of the Frobenius norm are somewhat foretold by the research in
Tilli~\cite{till:2001} and Romeo and Tilli~\cite{roti:2001}, where it is shown that
each of the inequalities in \Cref{conj:li1998}, if true in its generality for any matrix $G$, implies
the involved unitarily invariant norm must be uniformly equivalent to the Frobenius norm.
\Cref{cor:roti2001',cor:main} say that the converses are also true.

\begin{table}[t]
\renewcommand{\arraystretch}{1.4}
\caption{\small Validity of the conjectures for $G=\ba\bone^{\T}-\bone\bb^{\T}$.
    % with $\ba\equiv [a_i],\,\bb\equiv [b_i]\in\bbC^n$
               }\label{tbl:summary}
\centerline{\small
\begin{tabular}{|c|c|c|c|c|}
  \hline
    & $\ba\equiv [a_i],\,\bb\equiv [b_i]$ & $\|\cdot\|_{\UI}$ & admissible constant  & References \\ \hline\hline
\multirow{3}{*}{\eqref{eq:liconj-1} holds}
    & $\ba,\,\bb\in\bbR^n$ & $\|\cdot\|_{\UI}$ &  $c_1=1$    &  \cite[Thm.~1]{bhdk:1991} \\ \cline{2-5}
    & all $|a_i|=|b_i|=1$  & $\|\cdot\|_{\UI}$ & $c_1=\pi/2$ & \cite[Cor.~5]{bhdk:1991}, \cite[Thm.~5.2]{bhdm:1983}  \\ \cline{2-5}
    & all $|a_i|=|b_i|=1$  & $\|\cdot\|_2$     &  $c_1=1$    & \cite{bhda:1984}, \cite[Cor.~5]{bhdk:1991} \\ \hline
\multirow{3}{*}{\eqref{eq:liconj-2} holds}
    & $\ba,\,\bb\in\bbR^n$ & $\|\cdot\|_{\UI}$ & $c_2=1$         &  \cite[Thm.~2.1]{bhkl:1997a} \\ \cline{2-5}
    & all $|a_i|=|b_i|=1$  & $\|\cdot\|_{\UI}$ & $c_2=(\pi/2)^2$ &  \cite[Thm.~2.2]{bhkl:1997a} \\ \cline{2-5}
    & all $|a_i|=|b_i|=1$  & $\|\cdot\|_2$     & $c_2=1$         &  \cite{bhda:1984}, \cite[Thm.~2.2]{bhkl:1997a} \\ \hline
\multirow{4}{*}{\eqref{eq:liconj-0} holds}
    & $\ba,\,\bb\in\bbR^n$ & $\|\cdot\|_{\UI}$ & $c_0=1$         &  \cite{bhat:1997,stsu:1990} \\ \cline{2-5}
    & all $|a_i|=|b_i|=1$  & $\|\cdot\|_{\UI}$ & $c_0=\pi/2$     &  \cite[Thm.~5.2]{bhdm:1983} \\ \cline{2-5}
    & all $|a_i|=|b_i|=1$  & $\|\cdot\|_2$     & $c_0=1$         &  \cite{bhda:1984} \\ \cline{2-5}
    & $\ba,\,\bb\in\bbC^n$ & $\|\cdot\|_2$     & $c_0<\frac {\pi}2\int_0^{\pi}\frac {\sin t}t\rmd\! t$
                &  \cite[Thm.~5.1]{bhdm:1983}, \cite{bhdk:1989} \\ \hline
\end{tabular}
}
\end{table}

The results of Romeo and Tilli do not exclude the validity of \Cref{conj:li1998} for
unitarily invariant norms other than the ones that are uniformly equivalent to the Frobenius norm
if additional constraints are imposed on $G$. In fact,
for rank-one $G$, Romeo and Tilli~\cite[Theorem~1.2]{roti:2001}
proved \eqref{eq:liconj-1} with $c_1=1$ and the unitary restriction
\eqref{eq:liconj-2'} with $c_2=1$.
They explicitly left the corresponding nonsingular version of
Problem~3 unresolved; see \cite[Remark~3.1]{roti:2001}.
%Romeo and Tilli~\cite{roti:2001} already showed that both conjectures in \Cref{conj:li1998}  hold with $c_1=c_2=1$ if $\rank(G)=1$.
A natural next focus would be on the case $\rank(G)=2$ as in those coming from the
eigenvalue perturbation theory \cite{li:1998}. There are also a number of special cases in $G$,
from eigenvalue perturbations,
for which some forms of the conjectures hold.
\Cref{tbl:summary} provides a summary
%, lists corollaries of known eigenvalue perturbation results
%towards conjectures \eqref{eq:liconj-1}, \eqref{eq:liconj-2}, and \eqref{eq:liconj-0}
stemming from the additive perturbation theory \cite{bhat:2007,stsu:1990,li:2014HLA}
for $G=\ba\bone^{\T}-\bone\bb^{\T}$, where $\ba\equiv [a_i],\,\bb\equiv [b_j]\in\bbC^n$, and $\bone\in\bbR^n$ is the vector of all ones. Additionally, some of the results in \cite{bhli:1996,li:1993a,li:1994a,li:2003} can be reformulated
similarly with $G=\ba\bd^{\T}-\bc\bb^{\T}$ where $\ba,\bb,\bc,\bd\in\bbR^n$ such that
$\ba^{\circ 2}+\bc^{\circ 2}=\bb^{\circ 2}+\bd^{\circ 2}=\bone$. Details are omitted.

In connection with the relative perturbation theory \cite{li:2014HLA}, we have one implied result
regarding \eqref{eq:liconj-0}: for $G\equiv [g_{ij}]=\left[\frac {a_i^2-b_j^2}{a_ib_j}\right]\in\bbR^{n\times n}$ with
$a_i\ge 0,\,b_j\ge 0$ for $1\le i,j\le n$, where it is understood that $g_{ij}=0$ both $a_i=b_j=0$ and $\infty$ if one
of $a_i$ and $b_j$ is $0$ but the other is positive. 
Conjecture \eqref{eq:liconj-0} holds for any unitarily invariant norm and with $c_0=1$. This is a corollary of
\cite[ineq.~(3.4)]{li:1994b96} for the spectral norm but of \cite[ineq.~(5.8)]{lima:1999}
for a general unitarily invariant norm.

The structured matrices above have rank at most two.
This raises the question of whether \eqref{eq:liconj-1},
\eqref{eq:liconj-2}, and \eqref{eq:liconj-0} hold under the sole
assumption $\rank(G)=2$, without these additional structural constraints.
For $G=\ba\bone^{\T}-\bone\bb^{\T}$ with
$\ba,\bb\in\bbC^n$ and $\|\cdot\|_{\UI}=\|\cdot\|_2$, 
finding
the smallest constant $c_0$ in \eqref{eq:liconj-0} that is valid
uniformly for all $n\ge1$ and all $\ba,\bb\in\bbC^n$ is a long standing open problem
and is recently collected as
Problem SP-07 of~\cite{townsend2026openproblemsnla}.
\iffalse
More precisely, for normal matrices $A,B\in\bbC^{n\times n}$ with
eigenvalues $a_1,\ldots,a_n$ and $b_1,\ldots,b_n$, respectively,
counted with algebraic multiplicity, define
$$
d_\infty(A,B):=\min_{\tau}\max_{1\le i\le n}|a_i-b_{\tau(i)}|,
$$
where $\tau$ ranges over all permutations of $\bbN_n$.
The problem is to determine the smallest constant $C$ such that
$d_\infty(A,B)\le C\|A-B\|_2$ for every such pair in every dimension.
To see the equivalence, set $D_a=\diag(a_1,\ldots,a_n)$ and
$D_b=\diag(b_1,\ldots,b_n)$. For every unitary $W$,
$$
\begin{aligned}
\|W\circ G\|_2
&=\|D_aW-WD_b\|_2
=\|D_a-WD_bW^{\HH}\|_2,\\
\min_{P\in\bbP_n}\|P\circ G\|_2
&=\min_{\tau}\max_{1\le i\le n}|a_i-b_{\tau(i)}|.
\end{aligned}
$$
After a simultaneous unitary change of basis, every such pair has
the form $(D_a,WD_bW^{\HH})$ for some unitary $W$.
Thus the same constants are admissible in the spectral-matching
inequality and in \eqref{eq:liconj-0} for this class of $G$.
\fi

So far, the Frobenius-norm consequences are omitted from the table because
the three conjectures have been proved true  with $c_0=c_1=c_2=1$ for the Frobenius norm and for any matrix $G$.
 
%%%%%%%%%%%
\section{Conclusions}\label{sec:concl}
In this paper, we resolve one of the conjectures in Li~\cite{li:1998}, i.e., \eqref{eq:liconj-2},  for the case of the Frobenius norm. Namely, we show
$$
\min_{W\ \mathrm{nonsingular}}
\|W\circ G\|_{\F}\,\|W^{-1}\circ G^{\T}\|_{\F}
      =\min_{P\ \mathrm{permutation}}\|P\circ G\|_{\F}^2.
$$
This, combined with Romeo-Tilli theorem, \Cref{thm:roti2001}, solves the conjecture in its generality for any $G$:
conjecture \eqref{eq:liconj-2} holds if and only if the involved unitarily invariant norm is uniformly equivalent to
the Frobenius norm with respect to matrix size. As a result, further questions concerning general unitarily invariant norm 
may rest in $G$ with certain structural constraints such as being of low-rank. In fact, restricted to unitary matrices $W$, Romeo and Tilli \cite[Theorem 1.2]{roti:2001} showed that the conjecture \eqref{eq:liconj-1} holds with $c_1=1$ if
$\rank(G)=1$ and also that if $\rank(G)=1$ then
$$
\min_{W\ \mathrm{unitary}}
\|W\circ G\|_{\UI}\,\|W^{-1}\circ G^{\T}\|_{\UI}
      =\min_{P\ \mathrm{permutation}}\|P\circ G\|_{\UI}^2.
$$
As the case $\rank(G)=2$ arises naturally  in the study of
matrix eigenvalue perturbation theory, the next logical question is: what if $\rank(G)\le 2$?

\section*{AI statement}
The authors used assistance from AI tools to develop ideas presented in this
paper.
The authors assume responsibility for all content.

\clearpage
\bibliographystyle{plain}
\bibliography{refs}

\end{document}